\newcommand*{\logeq}{\Leftrightarrow}
\newtheorem{Theorem}{Theorem}
\newtheorem{Lemma}{Lemma}
\newtheorem{Remark}{Remark}
\newtheorem{Definition}{Definition}
\newcommand{\sign}{\textup{sign}}
\newcommand{\prox}{\textup{prox}}
\let\NAT@parse\undefined
\def\@linkcolor{blue}
  \def\@anchorcolor{red}
  \def\@citecolor{red}
  \def\@filecolor{red}
  \def\@urlcolor{red}
  \def\@menucolor{red}
  \def\@pagecolor{red}
  \edef\x{%
    \edef\noexpand\x{%
      \endgroup
      \noexpand\toks@{%
        \catcode 96=\noexpand\the\catcode`\noexpand\`\relax
        \catcode 61=\noexpand\the\catcode`\noexpand\=\relax
      }%
    }%
    \noexpand\x
  }%
\def\BibTeX{{\rm B\kern-.05em{\sc i\kern-.025em b}\kern-.08em
    T\kern-.1667em\lower.7ex\hbox{E}\kern-.125emX}}
\begin{document}

\title{\LARGE{\bf CAPPA: Continuous-time Accelerated Proximal Point Algorithm for Sparse Recovery}}

\author{Kunal Garg \and Mayank Baranwal
\thanks{K.~Garg is with the Department of Aerospace Engineering, University of Michigan, Ann Arbor, MI, 48109 USA e-mail: \texttt{kgarg@umich.edu}.}
\thanks{M.~Baranwal is with the Department of Electrical Engineering and Computer Science, University of Michigan, Ann Arbor, MI, 48109 USA e-mail: \texttt{mayankb@umich.edu}.}
}

\maketitle

\begin{abstract}
This paper develops a novel Continuous-time Accelerated Proximal Point Algorithm (CAPPA) for $\ell_1$-minimization problems with provable fixed-time convergence guarantees. The problem of $\ell_1$-minimization appears in several contexts, such as sparse recovery (SR) in Compressed Sensing (CS) theory, and sparse linear and logistic regressions in machine learning to name a few. Most existing algorithms for solving $\ell_1$-minimization problems are discrete-time, inefficient and require exhaustive computer-guided iterations. CAPPA alleviates this problem on two fronts: (a) it encompasses a continuous-time algorithm that can be implemented using analog circuits; (b) it betters LCA and finite-time LCA (recently developed continuous-time dynamical systems for solving SR problems) by exhibiting provable fixed-time convergence to optimal solution. Consequently, CAPPA is better suited for fast and efficient handling of SR problems. Simulation studies are presented that corroborate computational advantages of CAPPA.
\end{abstract}

\section{Introduction}\label{sec:Intro}
Sparse Recovery (SR) or reconstruction of sparse signals from highly-undersampled linear measurements is fundamental to the theory of Compressed Sensing (CS)~\cite{candes2008introduction}. Unlike traditional sampling methods, coded measurements in CS require fewer resources in terms of computational time and storage by leveraging simultaneous acquisition and compression of a signal. As a result, SR finds applications in several domains, including but not limited to signal processing~\cite{candes2008introduction, mairal2007sparse}, medical imaging~\cite{lustig2007sparse}, and machine learning~\cite{lee2006efficient}. The major bottleneck in CS is the computational effort required for sparse recovery, i.e., reconstruction of original signal from its compressed elements. For an observed measurement $y\in\mathbb{R}^M$ corrupted by some noise $\epsilon\in \mathbb R^M$, SR aims to find a concise representation of a signal $x\in\mathbb{R}^N$ specified as:
\begin{equation*}
    y = \Phi x + \epsilon,
\end{equation*}
where $\Phi\in\mathbb{R}^{M\times N}$ measurement matrix $(M\ll N)$, and the original signal $x$ is $s$-sparse, i.e., has no more than $s$ nonzero entries. SR, therefore, involves an under-determined linear inverse problem, and a unique recovery is guaranteed under certain properties on $\Phi$.

The problem of SR can be cast as an equivalent convex optimization problem with sparsity-inducing $\ell_1$-penalty term given as (see, e.g., \cite{candes2005decoding}):
\begin{align}\label{eq:problem}
    \text{arg}\underset{x\in\mathbb{R}^N}{\min}\frac{1}{2}\|y-\Phi x\|_2^2 + \lambda\|x\|_1, \tag{P}
\end{align}
where $\lambda>0$ is a balancing parameter. The solution $x^*$ to \eqref{eq:problem} is referred as the \emph{critical point}. The critical point is unique for an $s$-sparse signal $x$, provided the measurement matrix $\Phi$ satisfies RIP condition with order of $2s$~\cite{candes2007dantzig}. While the relaxed optimization problem \eqref{eq:problem} is convex and computationally tractable, real time SR or implementation on low-power embedded platforms is impractical with most iterative solvers~\cite{becker2011nesta, yin2008bregman, daubechies2004iterative, blumensath2008iterative}.

Due to its extensive applications, the problem of SR from highly undersampled measurements has gained a considerable attention in the past two decades. While specialized convex solvers are efficient at handling large scale problems, they lack strong convergence guarantees about their running time ~\cite{becker2011nesta, yin2008bregman}. In order to alleviate this, iterative thresholding schemes were proposed with provable convergence guarantees on number of iterations required to achieve specified accuracy~\cite{daubechies2004iterative, blumensath2008iterative}, however, at the risk of computationally expensive iterations.

A new class of continuous-time algorithms, referred to as the Locally Competitive Algorithm (LCA) was proposed in \cite{rozell2008sparse}. LCA
consists of coupled nonlinear differential equations that settles to the minimizer of \eqref{eq:problem} in steady state. Besides its guaranteed exponential convergence as shown in \cite{balavoine2013convergence}, LCA can be implemented on low-power embedded systems using simple operational amplifiers, see e.g., \cite{shapero2013configurable}. LCA was later modified to guarantee finite-time convergence to the critical point in \cite{yu2017dynamical}. Finite-time convergence of LCA is related to the notion of finite-time stability of continuous-time dynamical systems introduced in \cite{bhat2000finite}.  In contrast to exponential stability, finite-time stability is a concept that guarantees convergence of solutions in a finite amount of time. Under this notion, the time of convergence, while finite, depends upon the initial conditions, and can grow unbounded as the initial conditions go farther away from the equilibrium point.

In this paper, we present a Continuous-time Accelerated Proximal Point Algorithm (CAPPA) for solving SR problem in a fixed-time. Fixed-time stability (FxTS), introduced in \cite{polyakov2012nonlinear} is a stronger notion than finite-time stability, where the time of convergence is uniformly bounded for all initial conditions. Tools from fixed-time stability theory are leveraged to demonstrate global fixed-time convergence of CAPPA to the critical point. To this end, the paper first presents the optimality criterion for non-smooth convex optimization problem in Lemma~\ref{lem:optim_cond}. Lemma~\ref{lem:RIP} then translates the RIP into an equivalent Lipschitz-gradient and strong convexity condition on the smooth part of the convex objective in \eqref{eq:problem}. Our primary results are presented in Theorem~\ref{thm:contraction} and Theorem~\ref{thm:FxTS}. Theorem~\ref{thm:contraction} establishes that the underlying proximal map defines a contraction around the critical point, while Theorem~\ref{thm:FxTS} establishes the fixed-time stability of CAPPA. The results presented in this paper are based on the authors' earlier work on a general class of mixed-variational inequality problems in \cite{garg2019fixedJOTA}, but specialized for the problem of sparse-recovery \eqref{eq:problem}. We present proofs of some of the main results in this paper, which are specialized for the problem at hand.

\section{Preliminaries}\label{sec:prelim}
\subsection{Notation}
We use $\mathbb{R}$ to denote the set of real numbers, $\mathcal{C}^1$ to denote the space of continuously differentiable functions, $\|\cdot\|$ to denote the Euclidean norm, unless otherwise specified, and $\langle\cdot\rangle$ to denote the standard inner product on $\mathbb{R}^N$. The $p-$norm is denoted using $\|\cdot\|_p$. While Lemma~\ref{lem:optim_cond} holds for any proper, closed, lower semi-continuous (lsc) convex functions $f, g$ with $f\in\mathcal{C}^1$, we use $f(x)$ and $g(x)$ to denote the functions $\frac{1}{2}\|y-\Phi x\|^2$ and $\lambda \|x\|_1$, respectively. Here, $x\in\mathbb{R}^N$ and $y\in\mathbb{R}^M$ represent the $s$-sparse signal and the measured signal, respectively, while $\Phi\in\mathbb{R}^{M\times N}$ denotes the measurement matrix. 

Some useful definitions on the notions of fixed-time stability, RIP, and necessary and sufficient condition for the critical point of \eqref{eq:problem} are discussed below.

\subsection{Fixed-time stability}\label{subsec:FxTS}
Consider the system: 
\begin{align}\label{ex sys}
	\dot x(t) = h(x(t)),
\end{align}
where $x\in \mathbb R^d$, $h: \mathbb R^d \rightarrow \mathbb R^d$ and $f(0)=0$. Assume that the solution of \eqref{ex sys} exists and is unique. As defined in \cite{bhat2000finite}, the origin is said to be an FTS equilibrium of \eqref{ex sys} if it is Lyapunov stable and \textit{finite-time convergent}, i.e., for all $x(0) \in \mathcal D \setminus\{0\}$, where $\mathcal D$ is some open neighborhood of the origin, $\lim_{t\to T(x(0))} x(t)=0$, where $T(x(0))<\infty$. The authors in \cite{polyakov2012nonlinear} presented the following result for fixed-time stability, where the time of convergence does not depend upon the initial condition, i.e., the settling-time function $T$ does not depend on the initial condition $x(0)$.
\begin{Lemma}[\cite{polyakov2012nonlinear}]\label{lemma:FxTS}
Suppose there exists a positive definite continuously differentiable function $V:\mathbb R^d\rightarrow\mathbb R$ for system \eqref{ex sys} such that $\dot V(x(t)) \leq -aV(x(t))^p-bV(x(t))^q$ with $a,b>0$, $0<p<1$ and $q>1$. Then, the origin of \eqref{ex sys} is FxTS, i.e., $x(t) = 0$ for all $t\geq T$, where the settling time $T$ satisfies $T \leq \frac{1}{a(1-p)} + \frac{1}{b(q-1)}$. 
\end{Lemma}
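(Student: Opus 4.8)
The plan is to reduce the multidimensional problem to a scalar differential inequality and then invoke the comparison principle. First I would observe that since $V$ is positive definite and the hypothesis gives $\dot V(x(t)) \le -aV(x(t))^p - bV(x(t))^q < 0$ whenever $x(t)\neq 0$, the map $t\mapsto V(x(t))$ is strictly decreasing along every nontrivial trajectory; together with positive definiteness of $V$ this already delivers Lyapunov stability of the origin, which is one of the two ingredients of FxTS. It then remains to extract a settling-time bound independent of $x(0)$.

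Next I would set $v(t) := V(x(t))$ and compare it with the scalar initial-value problem $\dot y = -ay^p - by^q$, $y(0)=v(0)$. Because $v$ satisfies $\dot v \le -a v^p - b v^q$, the comparison lemma yields $0 \le v(t) \le y(t)$ up to the first time $y$ reaches the origin. Separating variables in the scalar equation, the time $T^\ast$ for $y$ to reach $0$ from $y(0)=v_0$ is
\[
T^\ast = \int_0^{v_0} \frac{dy}{a y^p + b y^q}.
\]
Since $v(t)\le y(t)$ and $y$ hits $0$ in time $T^\ast<\infty$, I conclude $v(t)=0$, i.e. $x(t)=0$, for all $t\ge T^\ast$.

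The crux is then to bound $T^\ast$ uniformly in $v_0$, which I would do by splitting the integral at $y=1$. On $[0,1]$ we have $ay^p+by^q\ge ay^p$, so
\[
\int_0^{1}\frac{dy}{a y^p + b y^q}\le \int_0^{1}\frac{dy}{a y^p}=\frac{1}{a(1-p)},
\]
finite precisely because $p<1$. On $[1,\infty)$ we have $ay^p+by^q\ge by^q$, so
\[
\int_1^{v_0}\frac{dy}{a y^p + b y^q}\le \int_1^{\infty}\frac{dy}{b y^q}=\frac{1}{b(q-1)},
\]
finite precisely because $q>1$. Adding the two bounds gives $T^\ast \le \frac{1}{a(1-p)}+\frac{1}{b(q-1)}$ for every $v_0$ (if $v_0\le 1$ only the first term contributes and the bound still holds), which is exactly the claimed settling time $T$; hence $x(t)=0$ for all $t\ge T$ regardless of the initial condition.

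The main obstacle I expect is the rigorous justification of the comparison step rather than the integral estimates, which are routine. The right-hand side $-ay^p-by^q$ is not Lipschitz at $y=0$ because $p<1$, so uniqueness of the scalar solution near the origin needs care; the clean way around this is to apply the comparison lemma only on the region $y>0$, where the dynamics are locally Lipschitz, and then use the already-established finite-time convergence together with the fact that the origin is an equilibrium of \eqref{ex sys} to pin the trajectory at $0$ thereafter. One should also verify that $v(t)=V(x(t))$ is differentiable (or at least absolutely continuous) along solutions, so that the chain-rule inequality $\dot v\le -av^p-bv^q$ is meaningful; this follows from $V\in\mathcal{C}^1$ and the assumed existence and uniqueness of solutions of \eqref{ex sys}.
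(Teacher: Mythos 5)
The paper does not prove this lemma---it is imported verbatim from the cited reference \cite{polyakov2012nonlinear}---so there is no in-paper proof to compare against. Your argument is the standard proof of Polyakov's fixed-time stability result and it is correct: reducing to the scalar inequality, bounding the reaching time by $\int_0^{v_0} \frac{dy}{ay^p+by^q}$, and splitting the integral at $y=1$ (using $ay^p+by^q\ge ay^p$ below $1$, which needs $p<1$, and $\ge by^q$ above $1$, which needs $q>1$) yields exactly the bound $\frac{1}{a(1-p)}+\frac{1}{b(q-1)}$, uniformly in $v_0$. Your handling of the two delicate points is also right: the comparison/separation-of-variables step should be carried out on $\{y>0\}$ where the right-hand side is locally Lipschitz (or, even more directly, one can integrate $-\dot v/(av^p+bv^q)\ge 1$ to get $\int_{v(t)}^{v_0}\frac{ds}{as^p+bs^q}\ge t$ and conclude $v(t)=0$ for $t>T^\ast$ by contradiction), and the trajectory remains at the origin afterwards by the assumed uniqueness of solutions together with $h(0)=0$. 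The only cosmetic remark is that for the \emph{global} version of the conclusion one implicitly uses that the differential inequality holds for all $x$ and that $V$ is radially unbounded, which is how the lemma is applied later in the paper.
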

\begin{Remark}
	Lemma \ref{lemma:FxTS} provides characterization of fixed-time stability in terms of a Lyapunov function $V$. The existence of such a Lyapunov function for a suitably modified proximal dynamical system constitutes the foundation for rest of the analysis in the paper.
\end{Remark}

\subsection{Important results}
We need the following lemmas in the proof of our main result. 
\begin{Lemma}\label{lem:optim_cond} Let $f:\mathbb{R}^N\to\mathbb{R}, f\in\mathcal{C}^1$ be a proper, closed convex function, and $g:\mathbb{R}^N\to\mathbb{R}\cup\{\infty\}$ be another proper, closed, lsc convex function (possibly non-smooth). Then, $x^*\in\mathbb{R}^N$ is a minimizer of the sum $f(\cdot) + g(\cdot)$ if and only if
\begin{equation}\label{eq:iff cond}
    \langle\nabla f(x^*),x-x^*\rangle + g(x) - g(x^*) \geq 0, \quad \forall x\in\mathbb{R}^N.
\end{equation}
\end{Lemma}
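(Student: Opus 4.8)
The plan is to prove the two implications of the equivalence directly, using only the definition of a minimizer, the first-order (gradient) inequality for the convex differentiable function $f$, and the convexity of $g$. Equivalently, one could observe that condition \eqref{eq:iff cond} is exactly the statement $-\nabla f(x^*)\in\partial g(x^*)$, and that $x^*$ minimizes $f+g$ if and only if $0\in\partial(f+g)(x^*)=\nabla f(x^*)+\partial g(x^*)$ by the subdifferential sum rule. However, since only elementary convexity facts are needed, I would avoid invoking the sum rule and argue from first principles, which keeps the proof self-contained.

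For sufficiency, suppose \eqref{eq:iff cond} holds. Since $f$ is convex and differentiable, the gradient inequality gives $f(x)\ge f(x^*)+\langle\nabla f(x^*),x-x^*\rangle$ for every $x\in\mathbb{R}^N$. Adding $g(x)$ to both sides and using \eqref{eq:iff cond} rewritten as $\langle\nabla f(x^*),x-x^*\rangle+g(x)\ge g(x^*)$, one obtains $f(x)+g(x)\ge f(x^*)+g(x^*)$ for all $x$, so $x^*$ is a minimizer of $f+g$.

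For necessity, suppose $x^*$ minimizes $f+g$. Fix any $x$ with $g(x)<\infty$ (the inequality is trivial when $g(x)=\infty$) and set $x_t=x^*+t(x-x^*)$ for $t\in(0,1]$. Convexity of $g$ gives $g(x_t)\le(1-t)g(x^*)+t g(x)$, while optimality of $x^*$ gives $f(x_t)+g(x_t)\ge f(x^*)+g(x^*)$. Combining these to eliminate $g(x_t)$ and rearranging yields $f(x_t)-f(x^*)+t\big(g(x)-g(x^*)\big)\ge 0$. Dividing by $t>0$ and letting $t\to 0^+$, the difference quotient of $f$ along the segment converges to the directional derivative $\langle\nabla f(x^*),x-x^*\rangle$, which establishes \eqref{eq:iff cond}.

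The main obstacle is the necessity direction: one must pass to the limit along the segment $x_t$, which requires both using convexity of $g$ to linearize the non-smooth term so that it scales linearly in $t$, and using differentiability of $f$ so that the difference quotient converges to the gradient inner product. Both steps are standard, so the argument is short, but the convex-combination bound on $g$ is precisely what makes the limit well behaved despite $g$ being non-smooth, so I expect that to be the step requiring the most care.
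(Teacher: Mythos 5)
Your proof is correct and follows essentially the same route as the paper: the sufficiency direction uses the gradient inequality for $f$ in the identical way, and the necessity direction uses the same segment $x^*+t(x-x^*)$, the same convex-combination bound on $g$, and the same passage to the directional derivative of $f$ (the paper merely phrases this direction as a proof by contradiction rather than a direct limit argument). Your explicit handling of the case $g(x)=\infty$ is a minor point of added care not present in the paper.
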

\begin{proof} First, we prove the ``only-if" part. We are given that $\langle\nabla f(x^*),x-x^*\rangle + g(x) - g(x^*) \geq 0$ for all $x\in\mathbb{R}^N$. Adding $f(x^*)$ on both sides yields that for all $x\in\mathbb{R}^N$:
$$f(x^*)+\langle\nabla f(x^*),x-x^*\rangle + g(x) - g(x^*)\geq f(x^*).$$ However, since $f(x)$ is convex, it follows that $f(x)\leq f(x^*)+\langle\nabla f(x^*),x-x^*\rangle$, which yields that $f(x)+g(x)\geq f(x^*)+g(x^*)$ for all $x\in\mathbb{R}^N$. Thus, if $x^*$ satisfies \eqref{eq:iff cond}, then it minimizes $f+g$.

Next, we prove the ``if" part. We are given that $x^*\in\mathbb{R}^N$ is a minimizer of the $f(\cdot)+g(\cdot)$. Let us assume otherwise that there exists $\bar{x}\in\mathbb{R}^N$, such that $\langle\nabla f(x^*),\bar{x}-x^*\rangle + g(\bar{x}) - g(x^*)<0$. For any $\alpha\in(0,1)$, let us define $z_\alpha\coloneqq x^*+\alpha(\bar{x}-x^*)$. Since the function $g(\cdot)$ is convex, it follows that
\begin{align}\label{eq:g_con}
    \alpha\left(g(\bar{x}) - g(x^*)\right) \geq g(z_\alpha)-g(x^*).
\end{align}
Furthermore, from the definition of directional derivative of the function $f(\cdot)$, it follows that
\begin{align}\label{eq:f_der}
    \langle\nabla f(x^*),\bar{x}-x^*\rangle = \lim_{\alpha\to 0}\frac{f(z_\alpha)-f(x^*)}{\alpha}.
\end{align}
Thus, from our assumption and \eqref{eq:f_der}, it follows that
\begin{align*}
    \lim_{\alpha\to 0}\frac{f(z_\alpha)-f(x^*)}{\alpha} + g(\bar{x})-g(x^*) < 0.
\end{align*}
Thus, there must exist sufficiently small $\alpha>0$, such that
\begin{align}\label{eq:f_comb}
    \frac{f(z_\alpha)-f(x^*)}{\alpha} + g(\bar{x})-g(x^*) &< 0 \nonumber\\
    \overset{\eqref{eq:g_con}}{\iff} \quad f(z_\alpha)-f(x^*) + g(z_\alpha)-g(x^*) &< 0 \nonumber\\
    \iff \quad f(z_\alpha) + g(z_\alpha) < f(x^*) + g(x^*),
\end{align}
which contradicts the fact that $x^*$ is a minimizer of the sum $f(\cdot)+g(\cdot)$.
\end{proof}


\begin{Lemma}[\hspace{-0.1pt}\cite{garg2019fixedJOTA}]\label{lem:c_ineq}
    For every $c\in(0,1)$, there exists $\epsilon(c) = \frac{\log(c)}{\log\left(\frac{1-c}{1+c}\right)}>0$ such that $\left(\frac{1-c}{1+c}\right)^{1-\alpha}>c$
	for any $\alpha\in(1-\epsilon(c),1)\bigcup (1,\infty)$.
\end{Lemma}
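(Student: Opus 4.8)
The plan is to reduce the claimed inequality to a linear condition on $\alpha$ by taking logarithms, and then to split into the two subintervals. First I would introduce the shorthand $r \coloneqq \frac{1-c}{1+c}$ and record the two sign facts that drive everything: since $c\in(0,1)$ we have $0<1-c<1<1+c$, hence $r\in(0,1)$ and $\log r<0$; and likewise $\log c<0$. These immediately give $\epsilon(c)=\frac{\log c}{\log r}>0$ as a quotient of two negative numbers, which justifies the positivity claim $\epsilon(c)>0$ in the statement.

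Next I would rewrite the target inequality $r^{1-\alpha}>c$ in logarithmic form. Because both sides are positive and $\log$ is strictly increasing, $r^{1-\alpha}>c$ is equivalent to $(1-\alpha)\log r>\log c$. The entire lemma is now a statement about when this linear-in-$\alpha$ inequality holds, and the natural split is by the sign of the coefficient $1-\alpha$.

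The two cases then resolve cleanly. For $\alpha\in(1,\infty)$ we have $1-\alpha<0$ and $\log r<0$, so the product satisfies $(1-\alpha)\log r>0>\log c$, and the inequality holds unconditionally on this branch. For $\alpha\in(1-\epsilon(c),1)$ we have $1-\alpha>0$; dividing $(1-\alpha)\log r>\log c$ through by the negative quantity $\log r$ reverses the inequality to $1-\alpha<\frac{\log c}{\log r}=\epsilon(c)$, that is, $\alpha>1-\epsilon(c)$, which is exactly the defining condition of this subinterval. Hence the inequality holds on both pieces of $(1-\epsilon(c),1)\cup(1,\infty)$.

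The only thing requiring care — and the closest thing to an obstacle — is bookkeeping the direction of the inequality each time one multiplies or divides by the negative quantities $\log r$ and $\log c$; every case ultimately reduces to this sign tracking. For that reason I would state the two sign facts up front and invoke them explicitly at each manipulation, rather than risk a silent flip of the inequality.
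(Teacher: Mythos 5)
Your proof is correct: the sign bookkeeping is right throughout, and the two cases ($1-\alpha<0$ giving the inequality unconditionally, and $1-\alpha\in(0,\epsilon(c))$ reducing exactly to $\alpha>1-\epsilon(c)$ after dividing by the negative quantity $\log\frac{1-c}{1+c}$) cover the stated domain. The paper itself gives no proof of this lemma (it is imported from \cite{garg2019fixedJOTA}), and your logarithmic reduction is the natural argument one would expect there, so nothing further is needed.
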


\subsection{Restricted isometry property}\label{subsec:RIP}
In order to guarantee unique solution of \eqref{eq:problem}, we need to make some assumptions on the matrix $\Phi$. One such assumption is restricted isometry property (RIP) \cite{candes2007dantzig}, defined as follows. 

\begin{Definition}
Matrix $\Phi\in\mathbb{R}^{M\times N}$ is said to satisfy the order-$s$ RIP with constant $\delta_s>0$ if for every $s$-sparse vector $x\in\mathbb{R}^N$, the following holds true
\begin{align*}
    \left(1-\delta_s\right)\|x\|_2^2\leq \|\Phi x\|_2^2\leq \left(1+\delta_s\right)\|x\|_2^2 .
\end{align*}
\end{Definition}

\noindent Using the notion of RIP, we can state the following result, which shows Lipschitz continuity and strong-monotonicity of $F = \nabla f$, where $f = \frac{1}{2}\|y-\Phi x\|_2^2$ (latter is equivalent to strong-convexity of $f$). 

\begin{Lemma}\label{lem:RIP}
Let $F:\mathbb{R}^N\to\mathbb{R}^N$ be defined as gradient of $\frac{1}{2}\|y-\Phi x\|_2^2$, given as
\begin{align}\label{eq: F}
    F(\cdot)\coloneqq \left(\Phi^\intercal\Phi\right)(\cdot) - \Phi^\intercal y,
\end{align}
for any given $y\in \mathbb R^M$, where $\Phi$ satisfied order $2s$ RIP with $\delta_{2s}>0$ for some $s\in \mathbb Z_+$. Then
\begin{itemize}
    \item[(i)] $F$ is Lipschitz continuous on the space of $s$-sparse vectors in $\mathbb{R}^N$ with modulus $\|\Phi\|_2\sqrt{(1+\delta_{2s})}$;
    \item[(ii)] For any $s$-sparse $x_1, x_2\in \mathbb R^N$  $$\left(F(x_1)-F(x_2)\right)^\intercal\left(x_1-x_2\right)\geq (1-\delta_{2s})\|(x_1-x_2\|_2^2.$$
\end{itemize}
\end{Lemma}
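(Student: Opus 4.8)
The plan is to reduce both parts to the RIP inequality by exploiting the affine structure of $F$. Since $F$ is affine, the constant term $\Phi^\intercal y$ cancels in any difference, so $F(x_1)-F(x_2) = \Phi^\intercal\Phi(x_1-x_2)$. I would set $z \coloneqq x_1 - x_2$ and make the observation that drives the whole argument: if $x_1$ and $x_2$ are each $s$-sparse, their supports together contain at most $2s$ indices, so $z$ is $(2s)$-sparse. This is precisely why the hypothesis invokes the order-$2s$ RIP rather than the order-$s$ RIP, and it is the one nontrivial bookkeeping step in the proof.

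For part (ii), I would compute the bilinear form directly: $\left(F(x_1)-F(x_2)\right)^\intercal(x_1-x_2) = z^\intercal\Phi^\intercal\Phi z = \|\Phi z\|_2^2$. Applying the left half of the RIP inequality to the $(2s)$-sparse vector $z$ gives $\|\Phi z\|_2^2 \geq (1-\delta_{2s})\|z\|_2^2$, which is exactly the claimed strong-monotonicity bound $(1-\delta_{2s})\|x_1-x_2\|_2^2$.

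For part (i), I would estimate the norm of $\Phi^\intercal\Phi z$. Submultiplicativity of the spectral norm gives $\|\Phi^\intercal\Phi z\|_2 \leq \|\Phi^\intercal\|_2\,\|\Phi z\|_2 = \|\Phi\|_2\,\|\Phi z\|_2$, where I use that a matrix and its transpose share the same spectral norm (largest singular value). The right half of the RIP, again applied to the $(2s)$-sparse vector $z$, yields $\|\Phi z\|_2 \leq \sqrt{1+\delta_{2s}}\,\|z\|_2$. Chaining the two bounds produces $\|F(x_1)-F(x_2)\|_2 \leq \|\Phi\|_2\sqrt{1+\delta_{2s}}\,\|x_1-x_2\|_2$, which is the stated Lipschitz modulus.

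I expect no serious obstacle: both claims follow mechanically once the sparsity accounting is in place. The only point requiring genuine care is verifying that the difference of two vectors, each with at most $s$ nonzero entries, is truly $(2s)$-sparse, so that the order-$2s$ RIP legitimately applies to $z$. Everything after that is a direct combination of the two RIP bounds with elementary norm inequalities.
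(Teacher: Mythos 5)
Your proposal is correct and follows essentially the same route as the paper: cancel the affine term so that $F(x_1)-F(x_2)=\Phi^\intercal\Phi(x_1-x_2)$, note that the difference of two $s$-sparse vectors is $2s$-sparse, then apply the right half of the RIP together with submultiplicativity (and $\|\Phi^\intercal\|_2=\|\Phi\|_2$) for the Lipschitz bound and the left half for the monotonicity bound. No gaps.
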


\begin{proof} From the definition of $F$, it follows that for any $s$-sparse $x_1, x_2 \in \mathbb{R}^N$,
\begin{align}\label{eq:F_lip1}
    F(x_1) - F(x_2) &= \left(\Phi^T\Phi\right)(x_1-x_2) \nonumber \\
    &= \Phi^T\left(\Phi(x_1-x_2)\right) \nonumber \\
    \Rightarrow \quad \|F(x_1) - F(x_2)\|_2 &\leq \|\Phi\|_2\|\Phi(x_1-x_2)\|_2,
\end{align}
where the last inequality follows from the submultiplicative inequality of matrix-vector product and the fact that $2$-norm of a matrix is same as that of its transpose. Since $x_1, x_2$ are $s$-sparse, $x_1-x_2$ is at most $2s$-sparse. Thus, from \eqref{eq:F_lip1} and the right-hand side of the RIP, it immediately follows that
\begin{equation*}\label{eq:F_lip2}
    \|F(x_1) - F(x_2)\|_2 \leq \|\Phi\|_2\sqrt{(1+\delta_{2s})} \|(x_1-x_2)\|_2,
\end{equation*}
i.e., $F$ is Lipschitz with modulus $\|\Phi\|_2\sqrt{(1+\delta_{2s})}$.

\noindent Again from the definition of the operator $F$, it follows that
\begin{eqnarray}\label{eq:F_strong}
    \left(F(x_1)-F(x_2)\right)^\intercal\left(x_1-x_2\right) =& \|\Phi(x_1-x_2)\|_2^2 \nonumber \\
    \left(F(x_1)-F(x_2)\right)^\intercal\left(x_1-x_2\right) \geq& (1-\delta_{2s})\|x_1-x_2\|_2^2,
\end{eqnarray}
where the last inequality follows directly from the left-hand side of the RIP.
\end{proof}

\section{Continuous-time Accelerated Proximal Point Algorithm (CAPPA)}\label{sec:CAPPA}
In this section, we present the fixed-time stable dynamical system termed as CAPPA to find the solution of \eqref{eq:problem} using a proximal flow approach. First we show that the equilibrium point of the proposed CAPPA solves \eqref{eq:problem}, and the solution of CAPPA exists, is unique and converge to its equilibrium point within a fixed time. 

\subsection{Modified Proximal dynamical system}\label{subsec:prox_dyn}
Proximal dynamical system for the problem \eqref{eq:problem} is given as 
\begin{align}\label{eq: prox nom}
    \dot x  = -(x-\prox_{\eta g}\left(x-\eta F(x)\right)),
\end{align}
where $F$ is defined as in \eqref{eq: F}. It has been shown that under certain conditions on $F$, the solution $x^*$ of \eqref{eq: prox nom} exponentially converge to the optimal solution of \eqref{eq:problem} (see, e.g., \cite{hassan2018exponential}). Using this, we define a modification of this PDS so that the convergence can be guaranteed within a fixed time. Consider the modified PDS, that we call as CAPPA:
\begin{equation}\label{eq:proposed-dyn}
\begin{split}
    \dot{x} & = -\kappa_1\dfrac{(x-z(x))}{\|x-z(x)\|_2^{1-\alpha_1}} -\kappa_2\dfrac{(x-z(x))}{\|x-z(x)\|_2^{1-\alpha_2}},\\
    z(x)& = \prox_{\eta g}\left(x-\eta F(x)\right) \\
    & = \sign(x-\eta F(x))\cdot\max\left(|x-\eta F(x)|-\eta\lambda,0\right)
\end{split}
\end{equation}
where $\eta>0$, $0<\alpha_1<1)$, $\alpha_2>1$, $\kappa_1, \kappa_2>0$ and $g(x) = \lambda\|x\|_1$. Here $F(\cdot)$ is as defined in \eqref{eq: F}, and denotes the gradient $\nabla f$ of the function $f(x) = \frac{1}{2}\|y-\Phi x\|_2^2$ for a fixed $y$, while the operators $|\cdot|, \cdot, \max$ and $\sign$ are applied element-wise.  It can be readily shown that $f, g$ given as above satisfy the conditions of Lemma \ref{lem:optim_cond}. Note that \cite[Proposition 2]{garg2019fixedJOTA} guarantees that the solution of \eqref{eq:proposed-dyn} exist in the classical sense and is unique for all forward time. 

We first show that the equilibrium point of \eqref{eq:proposed-dyn} and the solution of \eqref{eq:problem} are same. 

\begin{Lemma}\label{lem:unique_eq}
    A point $\bar{x}\in\mathbb{R}^N$ is an equilibrium point of \eqref{eq:proposed-dyn} if and only if it is a solution of \eqref{eq:problem}.
\end{Lemma}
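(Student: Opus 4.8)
The plan is to reduce the equilibrium condition of \eqref{eq:proposed-dyn} to the fixed-point equation of the proximal-gradient map $z(\cdot)$, and then to characterize that fixed point through two applications of Lemma~\ref{lem:optim_cond}. Throughout, write $w(\bar x)\coloneqq \bar x - z(\bar x)$, so that the right-hand side of \eqref{eq:proposed-dyn} evaluated at $\bar x$ equals $-\left(\kappa_1 \|w(\bar x)\|_2^{\alpha_1-1} + \kappa_2 \|w(\bar x)\|_2^{\alpha_2-1}\right)w(\bar x)$.

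First I would establish that $\bar x$ is an equilibrium if and only if $\bar x = z(\bar x)$. For $w(\bar x)\neq 0$ the scalar prefactor $\kappa_1 \|w(\bar x)\|_2^{\alpha_1-1} + \kappa_2 \|w(\bar x)\|_2^{\alpha_2-1}$ is strictly positive (since $\kappa_1,\kappa_2>0$ and $\|w(\bar x)\|_2>0$), so the vector field vanishes precisely when $w(\bar x)=0$. The only delicate point is the behavior at $w(\bar x)=0$, where the factors $\|w(\bar x)\|_2^{1-\alpha_1}$ and $\|w(\bar x)\|_2^{1-\alpha_2}$ produce indeterminate forms; here one checks that each term has norm $\|w(\bar x)\|_2^{\alpha_i}\to 0$ as $w(\bar x)\to 0$ (using $\alpha_1,\alpha_2>0$), so the field extends continuously by $0$ at $w(\bar x)=0$. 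Hence the equilibrium set coincides with the fixed-point set of the map $x\mapsto z(x)=\prox_{\eta g}(x-\eta F(x))$.

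Next I would characterize $\bar x = \prox_{\eta g}(\bar x - \eta F(\bar x))$. By definition of the proximal operator, this is equivalent to saying that $\bar x$ minimizes $\psi(u)+g(u)$, where $\psi(u)\coloneqq \frac{1}{2\eta}\|u-(\bar x-\eta F(\bar x))\|_2^2$ is smooth and convex with $\nabla\psi(u)=\frac{1}{\eta}\big(u-(\bar x-\eta F(\bar x))\big)$. Applying Lemma~\ref{lem:optim_cond} to the pair $(\psi,g)$ at the minimizer $\bar x$, and using the identity $\nabla\psi(\bar x)=F(\bar x)$, this minimization is equivalent to
\begin{align*}
    \langle F(\bar x), x-\bar x\rangle + g(x) - g(\bar x) \geq 0, \quad \forall x\in\mathbb{R}^N.
\end{align*}
Since $F=\nabla f$ for $f(x)=\frac{1}{2}\|y-\Phi x\|_2^2$, this is exactly condition \eqref{eq:iff cond}, so a second application of Lemma~\ref{lem:optim_cond}, now to the original pair $(f,g)$, yields that the displayed inequality holds if and only if $\bar x$ minimizes $f+g$, i.e. solves \eqref{eq:problem}. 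Chaining these equivalences delivers the claim.

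Finally, I expect the only genuine obstacle to be the continuity argument at $w(\bar x)=0$ in the first step; the rest is a clean double invocation of Lemma~\ref{lem:optim_cond}. The crucial observation that makes the argument collapse is that the gradient of the proximal subproblem's smooth part, evaluated at its own minimizer $\bar x$, reproduces exactly $F(\bar x)=\nabla f(\bar x)$, which is precisely what maps the proximal fixed-point condition onto the optimality condition \eqref{eq:iff cond} for \eqref{eq:problem}.
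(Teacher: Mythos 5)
Your proposal is correct and follows essentially the same route as the paper: reduce the equilibrium condition to the fixed-point equation $\bar x = z(\bar x)$, convert that to the variational inequality $\langle F(\bar x), x-\bar x\rangle + g(x)-g(\bar x)\geq 0$, and invoke Lemma~\ref{lem:optim_cond} to identify this with minimization of $f+g$. The only difference is cosmetic but welcome: where the paper cites \cite[Proposition 12.26]{bauschke2017convex} for the proximal characterization, you re-derive it by applying Lemma~\ref{lem:optim_cond} to the prox subproblem itself (using $\nabla\psi(\bar x)=F(\bar x)$), and you also spell out the continuity of the vector field at $x = z(x)$, which the paper leaves implicit.
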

\begin{proof}
    First note that $\bar{x}$ is an equilibrium point of \eqref{eq:proposed-dyn} if and only if $\bar{x}=z(\bar{x})$. Furthermore, from \cite[Proposition 12.26]{bauschke2017convex}, it follows that 
	\begin{equation*}
		\begin{split}
    		\bar x = z(\bar x) \logeq \ & ((\bar x-\eta F(\bar x))-\bar x)^\intercal(q-\bar x) + \eta g(\bar x) \leq \eta g(q),\\
    		\logeq \ & \eta F(\bar x)^\intercal(q-\bar x) + \eta g(q)- \eta g(\bar x) \geq 0,\\
    		\logeq \ & F(\bar x)^\intercal(q-\bar x) + g(q)- g(\bar x) \geq 0
		\end{split}
	\end{equation*}
	for all $q\in \mathbb R^N$. Hence from Lemma~\ref{lem:optim_cond}, $\bar x\in\mathbb{R}^{N}$ is an equilibrium point of \eqref{eq:proposed-dyn} if and only if it minimizes the sum $f(\cdot)+g(\cdot)$, or equivalently, solves \eqref{eq:problem}.
\end{proof}

\subsection{Convergence analysis of CAPPA}\label{sec:main_results}


Next, we prove an intermediate result which shows contraction property of the right-hand side of PDS \eqref{eq: prox nom}. 

\begin{Theorem}\label{thm:contraction}
    For every $\eta\in\left(0,\dfrac{2(1-\delta_{2s})}{\|\Phi\|_2^2(1+\delta_{2s})}\right)$, there exists $c\in(0,1)$ such that
    \begin{align*}
        \|z(x)-x^*\|_2 \leq c\|x-x^*\|_2,
    \end{align*}
    for all $s$-sparse $x\in\mathbb{R}^N$, where $x^*\in\mathbb{R}^N$ is a solution of \eqref{eq:problem}, $z(x)\coloneqq \prox_{\eta g}\left(x-\eta F(x)\right)$ and $F$ is as defined in \eqref{eq: F}.
\end{Theorem}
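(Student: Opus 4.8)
The plan is to turn the contraction claim into a comparison between $z(x)$ and its value at the fixed point, and then to squeeze that difference using the nonexpansiveness of the proximal operator together with the two RIP-induced properties of $F$ supplied by Lemma~\ref{lem:RIP}. First I would invoke Lemma~\ref{lem:unique_eq}: since $x^*$ solves \eqref{eq:problem}, it is an equilibrium of the dynamics and hence a fixed point of the proximal map, $x^* = z(x^*) = \prox_{\eta g}\!\left(x^*-\eta F(x^*)\right)$. Recalling that for a proper, closed, convex $g$ the map $\prox_{\eta g}$ is firmly nonexpansive and in particular $1$-Lipschitz, I would write
\[
\|z(x)-x^*\|_2 = \left\|\prox_{\eta g}\!\left(x-\eta F(x)\right) - \prox_{\eta g}\!\left(x^*-\eta F(x^*)\right)\right\|_2 \le \left\|(x-x^*) - \eta\left(F(x)-F(x^*)\right)\right\|_2 .
\]

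Next I would square the right-hand side and expand the inner product, obtaining
\[
\left\|(x-x^*) - \eta\left(F(x)-F(x^*)\right)\right\|_2^2 = \|x-x^*\|_2^2 - 2\eta\left\langle F(x)-F(x^*),\, x-x^*\right\rangle + \eta^2\|F(x)-F(x^*)\|_2^2 .
\]
Because $x$ and the (necessarily $s$-sparse) minimizer $x^*$ are both $s$-sparse, their difference is at most $2s$-sparse, so Lemma~\ref{lem:RIP} applies directly: part~(ii) bounds the cross term from below by $(1-\delta_{2s})\|x-x^*\|_2^2$, and part~(i) bounds the last term from above by $\|\Phi\|_2^2(1+\delta_{2s})\|x-x^*\|_2^2$. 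Substituting these gives $\|z(x)-x^*\|_2^2 \le c^2\|x-x^*\|_2^2$ with $c^2 := 1 - 2\eta(1-\delta_{2s}) + \eta^2\|\Phi\|_2^2(1+\delta_{2s})$.

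The remaining and principal task is to verify that $c$ may be chosen in $(0,1)$ for exactly the stated range of $\eta$, which is where that specific upper bound originates. Imposing $c^2<1$ reduces, after cancelling the $1$ and factoring out $\eta>0$, to $\eta\|\Phi\|_2^2(1+\delta_{2s}) < 2(1-\delta_{2s})$, i.e. $\eta < \frac{2(1-\delta_{2s})}{\|\Phi\|_2^2(1+\delta_{2s})}$, recovering the hypothesis precisely. To see that $c^2$ is nonnegative (so that $c=\sqrt{c^2}$ is real) I would complete the square as $c^2 = \left(1-\eta(1-\delta_{2s})\right)^2 + \eta^2\!\left(\|\Phi\|_2^2(1+\delta_{2s}) - (1-\delta_{2s})^2\right)$ and note that the left RIP inequality forces $\|\Phi\|_2^2 \ge 1-\delta_{2s}$, whence $\|\Phi\|_2^2(1+\delta_{2s}) \ge 1-\delta_{2s}^2 \ge (1-\delta_{2s})^2$, so the bracketed term is nonnegative and $c^2\ge 0$. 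Taking $c=\sqrt{c^2}$ then yields $c\in[0,1)$, and in the degenerate borderline case $c^2=0$ the estimate holds trivially with any $c\in(0,1)$; in all cases a valid $c\in(0,1)$ exists. The only delicate bookkeeping is thus tying the quadratic-in-$\eta$ coefficient to the admissible interval and confirming nonnegativity of $c^2$ via the RIP bounds.
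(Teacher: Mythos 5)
Your proof is correct, but it takes a genuinely different route from the paper's. You invoke the nonexpansiveness of $\prox_{\eta g}$ together with the fixed-point identity $x^*=z(x^*)$ (from Lemma~\ref{lem:unique_eq}) to reduce the claim to bounding $\|(x-x^*)-\eta(F(x)-F(x^*))\|_2$, and then apply the two RIP-derived estimates of Lemma~\ref{lem:RIP} to the single pair $(x,x^*)$; this is the classical forward--backward contraction argument and yields $c^2=1-2\eta(1-\delta_{2s})+\eta^2\|\Phi\|_2^2(1+\delta_{2s})$. The paper instead starts from the variational characterization of the proximal point (\cite[Proposition 12.26]{bauschke2017convex}) evaluated at $q=x^*$, combines it with the optimality condition of Lemma~\ref{lem:optim_cond}, splits $F(x^*)-F(x)$ through the intermediate point $F(z(x))$, and closes the estimate with Young's inequality, arriving at $\bar c=\left(1+2\eta(1-\delta_{2s})-\eta^2\|\Phi\|_2^2(1+\delta_{2s})\right)^{-1}$. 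Both derivations produce exactly the stated admissible interval for $\eta$, and since $1-t\le(1+t)^{-1}$ for $t>0$ your constant is in fact slightly sharper. Your route also has the minor advantage that Lemma~\ref{lem:RIP} is only ever applied to the pair $(x,x^*)$, whereas the paper's argument additionally applies it to pairs involving $z(x)$, whose $s$-sparsity is not separately established; on the other hand, both proofs implicitly assume the minimizer $x^*$ is itself $s$-sparse, a point you flag but neither argument formally verifies. The care you take in confirming $c^2\ge 0$ via the RIP lower bound on $\|\Phi\|_2^2$ is a detail the paper's formulation does not require, and you handle it correctly.
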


\noindent The proof is provided in the Appendix \ref{app:th1}. Now we are ready to prove the main result of the paper, which shows that \eqref{eq:proposed-dyn} can be used to solve \eqref{eq:problem} within a fixed time for any initial condition. 

\begin{Theorem}\label{thm:FxTS}
    For every $\eta\in\left(0,\dfrac{2(1-\delta_{2s})}{\|\Phi\|_2^2(1+\delta_{2s})}\right)$, there exists $\varepsilon>0$ such that the solution $x^*\in\mathbb{R}^N$ of \eqref{eq:problem} is a globally fixed-time stable equilibrium point of \eqref{eq:proposed-dyn} for any $\alpha_1\in(1-\varepsilon,1)\cap(0,1)$ and $\alpha_2>1$.
\end{Theorem}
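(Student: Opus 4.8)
The plan is to apply the fixed-time Lyapunov criterion of Lemma~\ref{lemma:FxTS} with the quadratic candidate $V(x) = \frac{1}{2}\|x-x^*\|_2^2$, which is positive definite and $\mathcal{C}^1$ relative to the equilibrium. By Lemma~\ref{lem:unique_eq} the solution $x^*$ of \eqref{eq:problem} is exactly the unique equilibrium of \eqref{eq:proposed-dyn}, and existence and uniqueness of trajectories is already supplied by \cite[Proposition 2]{garg2019fixedJOTA}; hence it suffices to exhibit constants $a,b>0$, $0<p<1$ and $q>1$ with $\dot V \le -aV^p - bV^q$ along every trajectory. Once this differential inequality is in hand, Lemma~\ref{lemma:FxTS} delivers global fixed-time convergence together with the explicit, initial-condition-independent bound $T \le \frac{1}{a(1-p)}+\frac{1}{b(q-1)}$.

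First I would differentiate $V$ along \eqref{eq:proposed-dyn}. Writing $r \coloneqq x-z(x)$ gives $\dot V = -\kappa_1\|r\|_2^{\alpha_1-1}(x-x^*)^\intercal r - \kappa_2\|r\|_2^{\alpha_2-1}(x-x^*)^\intercal r$, so the entire argument reduces to converting the inner product $(x-x^*)^\intercal r$ and the norm $\|r\|_2$ into powers of $\|x-x^*\|_2$, and this is precisely where Theorem~\ref{thm:contraction} is the engine. Setting $u=x-x^*$ and $v=z(x)-x^*$ so that $r=u-v$ with $\|v\|_2\le c\|u\|_2$, the Cauchy--Schwarz inequality yields $(x-x^*)^\intercal r = \|u\|_2^2-u^\intercal v \ge (1-c)\|u\|_2^2$, while the forward and reverse triangle inequalities give the two-sided bound $(1-c)\|u\|_2 \le \|r\|_2 \le (1+c)\|u\|_2$. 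Substituting the upper bound on $\|r\|_2$ into the $\alpha_1<1$ term (whose exponent $\alpha_1-1$ is negative) and the lower bound into the $\alpha_2>1$ term collapses the estimate to $\dot V \le -\kappa_1(1-c)(1+c)^{\alpha_1-1}\|u\|_2^{1+\alpha_1} - \kappa_2(1-c)^{\alpha_2}\|u\|_2^{1+\alpha_2}$, which in terms of $V$ reads $\dot V \le -aV^{(1+\alpha_1)/2} - bV^{(1+\alpha_2)/2}$ after absorbing the factors of $2$.

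It then remains only to verify the hypotheses of Lemma~\ref{lemma:FxTS}. The exponents are automatic: $p=\frac{1+\alpha_1}{2}\in(0,1)$ since $\alpha_1<1$, and $q=\frac{1+\alpha_2}{2}>1$ since $\alpha_2>1$. The coefficient $b=\kappa_2 2^{(1+\alpha_2)/2}(1-c)^{\alpha_2}$ is manifestly positive and I would dispatch it at once. The delicate coefficient is $a$, whose positivity is the main obstacle: rewriting $\kappa_1(1-c)(1+c)^{\alpha_1-1} = \kappa_1(1-c)^{\alpha_1}\bigl(\tfrac{1-c}{1+c}\bigr)^{1-\alpha_1}$ exposes exactly the quantity controlled by Lemma~\ref{lem:c_ineq}, so choosing $\varepsilon=\epsilon(c)$ from that lemma forces $\bigl(\tfrac{1-c}{1+c}\bigr)^{1-\alpha_1}>c$ for every $\alpha_1\in(1-\varepsilon,1)$, whence $a>\kappa_1 2^{(1+\alpha_1)/2}(1-c)^{\alpha_1}c>0$; this is the origin of the restriction $\alpha_1\in(1-\varepsilon,1)$ in the statement. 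With $a,b>0$ secured, Lemma~\ref{lemma:FxTS} closes the argument. The one subtlety demanding care beyond these estimates is that Theorem~\ref{thm:contraction} is stated only for $s$-sparse vectors, so the global claim must be understood over the regime where the contraction applies; I would flag this explicitly and argue, consistent with the RIP hypothesis, that the iterates entering the Lyapunov estimate remain within that regime.
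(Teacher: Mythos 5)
Your proof is correct, and at the decisive step it takes a genuinely different (and in fact tighter) route than the paper. The paper splits the numerator $(x-x^*)^\intercal(x-z(x))$ into $\|x-x^*\|^2$ plus the cross term $(x^*-z(x))^\intercal(x-x^*)$, bounds the two pieces with \emph{opposite} bounds on the denominator $\|x-z(x)\|^{1-\alpha_1}$ (the $(1+c)$ bound for the good term, the $(1-c)$ bound for the bad one), and ends up with the coefficient $s_1(\alpha_1)=\tfrac{\kappa_1}{(1-c)^{1-\alpha_1}}\bigl(\bigl(\tfrac{1-c}{1+c}\bigr)^{1-\alpha_1}-c\bigr)$, whose positivity genuinely requires Lemma~\ref{lem:c_ineq} and forces the restriction $\alpha_1\in(1-\epsilon(c),1)$. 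You instead first collapse the inner product to the single nonnegative bound $(x-x^*)^\intercal(x-z(x))\ge(1-c)\|x-x^*\|^2$ and only then bound the denominator once per term; this yields the coefficient $\kappa_1(1-c)(1+c)^{\alpha_1-1}$, which is manifestly positive for \emph{every} $\alpha_1\in(0,1)$. Your subsequent appeal to Lemma~\ref{lem:c_ineq} is therefore redundant --- you mischaracterize positivity of $a$ as ``the main obstacle'' when your own decomposition has already dissolved it --- but it is not wrong, and the theorem as stated (existence of some $\varepsilon>0$) follows either way. The net effect is that your argument proves a slightly stronger statement than the paper's ($\varepsilon$ can be taken to be $1$), at no extra cost.

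One caveat you raise deserves emphasis: Theorem~\ref{thm:contraction} is stated only for $s$-sparse $x$, and neither your argument nor the paper's actually verifies that the trajectory $x(t)$ of \eqref{eq:proposed-dyn} remains $s$-sparse for $t>0$ (generically it will not, since $z(x)$ need not preserve the support of $x$). You are right to flag this, but your closing sentence is a promissory note rather than a proof; the paper silently inherits the same gap by writing ``for all $s$-sparse $x$'' in \eqref{eq:ineq3} while applying it along the whole trajectory. This is a shared weakness of both arguments, not a defect specific to yours.
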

\begin{proof}
    Consider the candidate Lyapunov function {$V:\mathbb R^N\to\mathbb R$ defined as follows}:
    \begin{equation*}
       	V(x) \colonequals \frac{1}{2}\|x-x^*\|^2, 
    \end{equation*}
    where $x^*$ is the unique equilibrium point of the proposed \eqref{eq:proposed-dyn}. It is clear that $V$ is positive definite and radially unbounded. Note that from Lemma~\ref{lem:unique_eq}, $x^*$ is also the unique minimizer of the sum $f(\cdot)+g(\cdot)$. The time-derivative of the candidate Lyapunov function $V$ along the solution of \eqref{eq:proposed-dyn}, starting from any $s$-sparse $x(0)\in\mathbb{R}^N\setminus\{x^*\}$, reads:{\small
    \begin{align}\label{eq:ineq1}
        \dot V &= -\kappa_1\frac{(x-x^*)^\intercal(x-z(x))}{\|x-z(x)\|^{1-\alpha_1}} - \kappa_2\frac{(x-x^*)^\intercal(x-z(x))}{\|x-z(x)\|^{1-\alpha_2}} \nonumber \\
        &= -\kappa_1\frac{(x-x^*)^\intercal(x-x^*)}{\|x-z(x)\|^{1-\alpha_1}} - \kappa_2\frac{(x-x^*)^\intercal(x-x^*)}{\|x-z(x)\|^{1-\alpha_2}} \nonumber \\
        & -\kappa_1\frac{(x^*-z(x))^\intercal(x-x^*)}{\|x-z(x)\|^{1-\alpha_1}} - \kappa_2\frac{(x^*-z(x))^\intercal(x-x^*)}{\|x-z(x)\|^{1-\alpha_2}}.
    \end{align}}\normalsize
    Using the Cauchy--Schwarz inequality, the right-hand side of \eqref{eq:ineq1} can be upper bounded to obtain:{\small
    \begin{align}\label{eq:ineq2}
        \dot V &\leq -\left(\kappa_1\frac{\|x-x^*\|^2}{\|x-z(x)\|^{1\!-\!\alpha_1}}+\kappa_2\frac{\|x-x^*\|^2}{\|x-z(x)\|^{1\!-\!\alpha_2}}\right) \nonumber\\
        +&\left(\kappa_1\frac{\|x-x^*\|\|x^*-z(x)\|}{\|x-z(x)\|^{1\!-\!\alpha_1}}+\kappa_2\frac{\|x-x^*\|\|x^*-z(x)\|}{\|x-z(x)\|^{1\!-\!\alpha_2}}\right).
    \end{align}}\normalsize
    For $\eta\in\left(0,2(1-\delta_{2s})/(\|\Phi\|^2(1+\delta_{2s}))\right)$, we have from Theorem~\ref{thm:contraction} that{\small
    \begin{align}\label{eq:ineq3}
        \|x-z(x)\|\leq \|x-x^*\|+\|x^*-z(x)\|\leq (1+c)\|x-x^*\|,
    \end{align}}\normalsize
    holds for all $s$-sparse $x\in \mathbb R^N$, where $c\in(0,1)$. Similarly, one obtains that the following inequality:
    \begin{equation}\label{eq:ineq4}
	    \|x-z(x)\|\geq (1-c)\|x-x^*\|,
	\end{equation}
	also holds for all $x\in \mathbb R^N$. Using \eqref{eq:ineq3}, \eqref{eq:ineq4} and Theorem~\ref{thm:contraction}, the right hand side of \eqref{eq:ineq2} can further be upper bounded and so, \eqref{eq:ineq2} results into:{\small
	\begin{align}\label{eq:ineq5}
		\dot{V} \leq &-\left(\!\frac{\kappa_1\|x-x^*\|^2}{\left((1\!+\!c)\|x-x^*\|\right)^{1\!-\!\alpha_1}}\!+\!\frac{\kappa_2\|x-x^*\|^2}{\left((1\!+\!c)\|x-x^*\|\right)^{1\!-\!\alpha_2}}\!\right)\nonumber\\
		&+\left(\!\frac{c\kappa_1\|x-x^*\|^2}{\left((1\!-\!c)\|x-x^*\|\right)^{1\!-\!\alpha_1}}\!+\!\frac{c\kappa_2\|x-x^*\|^2}{\left((1\!-\!c)\|x-x^*\|\right)^{1\!-\!\alpha_2}}\!\right)\nonumber\\
		&= -s_1\|x-x^*\|^{1+\alpha_1}-s_2\|x-x^*\|^{1+\alpha_2},
	\end{align}}\normalsize
	where $s_1(\alpha_1) = \dfrac{\kappa_1}{(1-c)^{1-\alpha_1}}\left(\left(\frac{1-c}{1+c}\right)^{1-\alpha_1}-c\right)$, $s_2(\alpha_2) = \dfrac{\kappa_2}{(1-c)^{1-\alpha_2}}\left(\left(\frac{1-c}{1+c}\right)^{1-\alpha_2}-c\right)$. From Lemma~\ref{lem:c_ineq}, it follows that there exists $\epsilon(c) = \frac{\log(c)}{\log\left(\frac{1-c}{1+c}\right)}>0$ such that \eqref{eq:ineq5} results into:
	\begin{equation*}
	    \dot{V} \leq -\left(a_1(\alpha_1)V^{\gamma_1(\alpha_1)}+a_2(\alpha_2)V^{\gamma_2(\alpha_2)}\right),
	\end{equation*}
	with $a_1(\alpha_1)\colonequals 2^{\gamma_1(\alpha_1)}q_1(\alpha_1)>0$ for any $\alpha_1\in (1-\epsilon(c), 1)\cap(0,1)$, where $\gamma_1(\alpha_1)\colonequals\frac{1+\alpha_1}{2}\in(0.5,1)$ and $a_2(\alpha_2)\colonequals 2^{\gamma_2(\alpha_2)}q_2(\alpha_2)>0$ for any $\alpha_2>1$, where $\gamma_2(\alpha_2)\colonequals\frac{1+\alpha_2}{2}>1$. The proof can be concluded using Lemma \ref{lemma:FxTS}.
\end{proof}
\begin{Remark}\label{rem:T-def}
    Theorem~\ref{thm:FxTS} establishes fixed-time convergence of the modified PDS \eqref{eq:proposed-dyn} to the optimal solution of \eqref{eq:problem}. Furthermore, from Lemma~\ref{lemma:FxTS} (keeping in mind the final inequality given in the proof of Theorem~\ref{thm:FxTS}), it can be seen that the settling-time depends on $\kappa_1, \kappa_2, \alpha_1$ and $\alpha_2$. For a given {time budget} $\bar T<\infty$, the parameters $\kappa_1, \kappa_2, \alpha_1$ and $\alpha_2$ can be chosen so that the convergence is achieved under the given time budget $\bar T$. 
\end{Remark}

\section{Numerical Experiment}
We present a numerical experiment to demonstrate the efficacy of the proposed method. The computations are done using MATLAB R2018a on a desktop with a 32GB DDR3 RAM and an Intel Xeon E3-1245 processor (3.4 GHz). Unless mentioned otherwise, Euler discretization is used for \textsc{Matlab} implementation with time-step $dt=10^{-3}$, and with constant step-size, the convergence time $T$ in seconds translates to $T\times 10^3$ iterations. The matrix $\Phi\in \mathbb R^{M\times N}$ is drawn from a normal Guassian distribution (and normalized to make every column with unit norm), and the  noise $\epsilon$ is chosen as Guassian noise with zero mean and standard deviation $\sigma = 0.016$, similar to the numerical experiments in \cite{yu2017dynamical}. The parameters used are $a_1 = 0.1, a_2 = 1.1, k_1 = k_2 = 50, \lambda = 0.05, \sigma = 0.016, \eta = 0.4, N = 400, M = 200, s = 20$. We compare the performance of the proposed scheme with the finite-time variant of the LCA scheme in \cite{yu2017dynamical}, as well as the nominal LCA scheme. The parameters for LCA and finite-time LCA (denoted as FT in the figures), are same as in \cite{yu2017dynamical}. We use the \textsc{Matlab} function \texttt{fmicon} to compute an estimate of $x^*$ (denoted as $x_{fmin}$) for the sake of comparison, since the true value of $x$ can never be recovered in the presence of noise $\epsilon$.

\begin{figure}[!ht]
    \centering
        \includegraphics[width=1\columnwidth,clip]{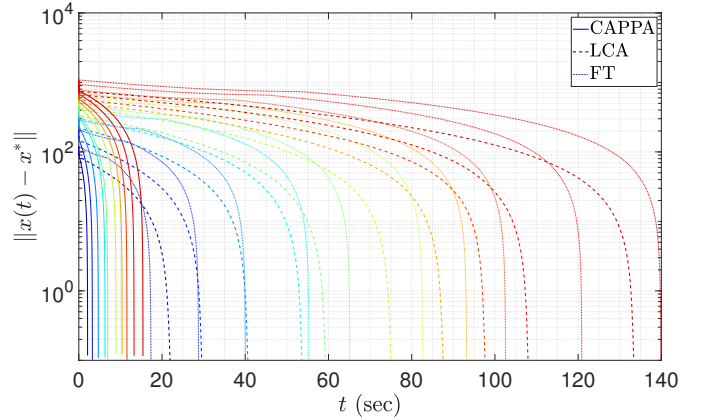}
    \caption{The error $\|x(t)-x_{fmin}\|$ with $t$ for various initial conditions. Solid lines plot the performance of CAPPA, while the performance of LCA and FT are shown using $--$ and $\cdot \cdot$ lines.}\label{fig:x0}
\end{figure}

 Figure \ref{fig:x0} shows the evolution of the error vector $\|x(t)-x_{fmin}\|$ with $t$ for various initial conditions for the proposed method CAPPA, the LCA method and the FT method. The $\log$ scale is used on $y-$ axis so that the variation of the norm $\|x(t)-x_{fmin}\|$ is clearly shown for values near zero, and the super-linear nature of convergence can be demonstrated. It is clear that CAPPA converges to the optimal solution within a fixed time irrespective of the initial conditions, and has a faster convergence as compared to LCA or FT. Figure \ref{fig:2} plots the value of $x(\cdot)$ after convergence, along with $x_{fmin}$ and $x_{true} = x^*$. It can be observed that CAPPA finds the same sparse solution, with 20 non-zero enteries. 
 
\begin{figure}[!ht]
    \centering
        \includegraphics[width=1\columnwidth,clip]{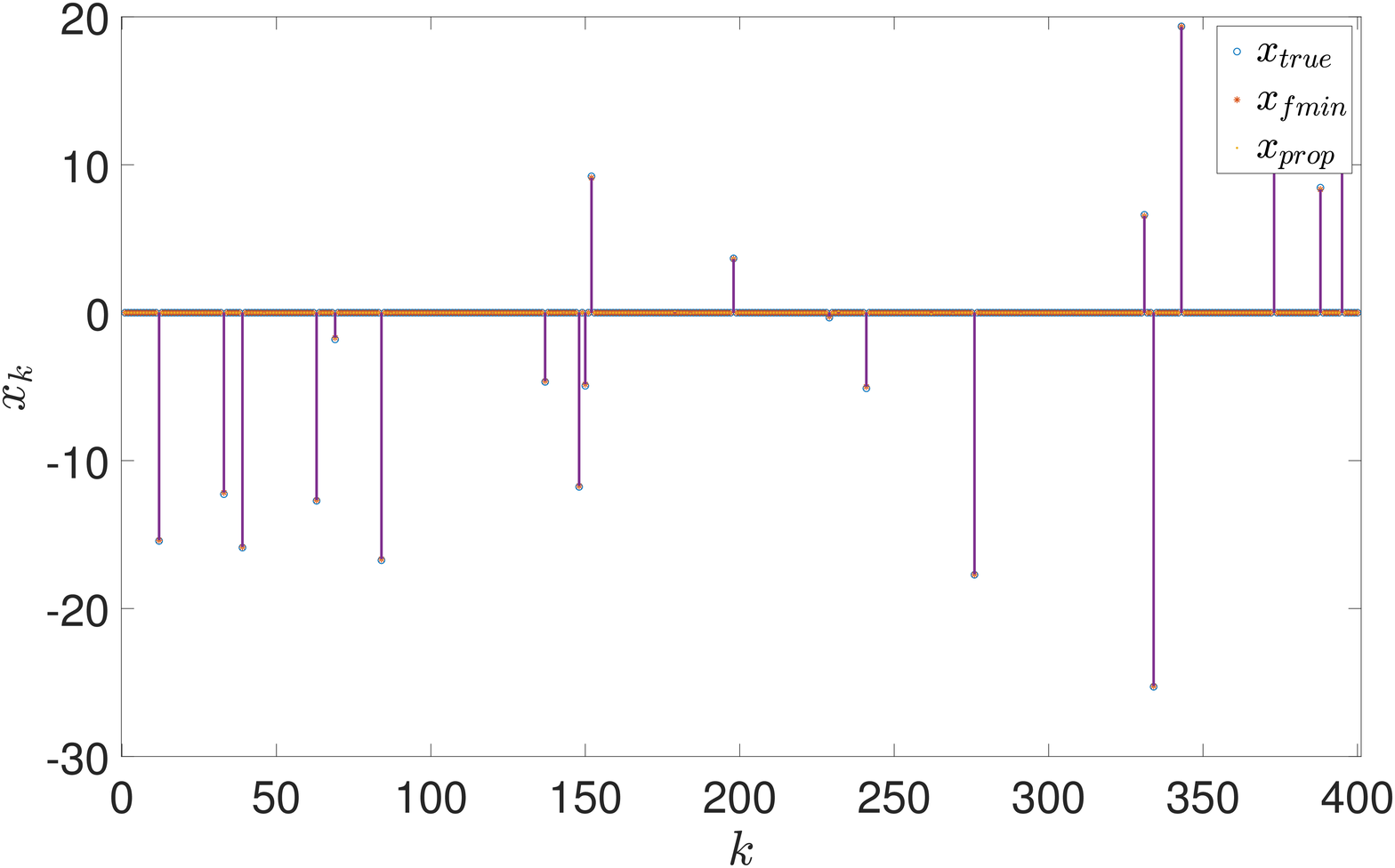}
    \caption{Output $x_{prop}$ of CAPPA, $x_{fmin}$ and $x_{true} = x^*$. }\label{fig:2}
\end{figure}

We also compare the actual computational time required by CAPPA with LCA and FT in terms of wall-clock time. Figure \ref{fig:wc} shows the wall-clock time of the three methods for 100 trials. The red dots denote the average times for the 100 trials, while the vertical lines show the minimum and maximum times for the respective schemes. It is evident from the figure that CAPPA takes less amount of computational time as compared to LCA and FT schemes. To see the effect of size of the problem, we simulated the three schemes for various values of $N\in (400, 600), M\in (200, 300)$ while maintaining a constant $\frac{N}{M} = 2$. Figure \ref{fig:wc NM} plots the wall-clock time for the three schemes for various values of $N,M$, and shows that CAPPA outperforms the other schemes in all cases. 

\begin{figure}[!ht]
    \centering
        \includegraphics[width=1\columnwidth,clip]{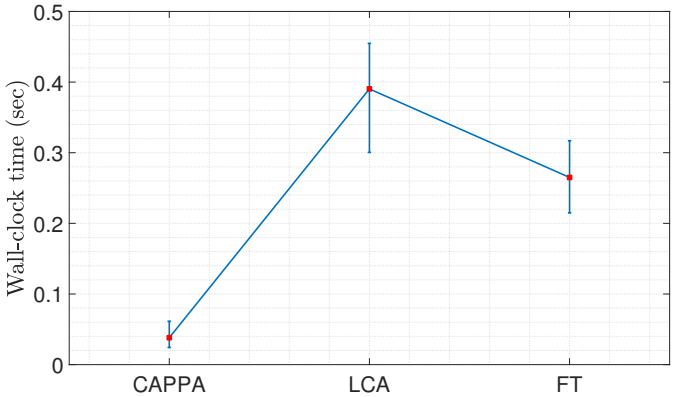}
    \caption{The wall-clock time of CAPPA, LCA and FT for 100 trials with random initialization.}\label{fig:wc}
\end{figure}

\begin{figure}[!ht]
    \centering
        \includegraphics[width=1\columnwidth,clip]{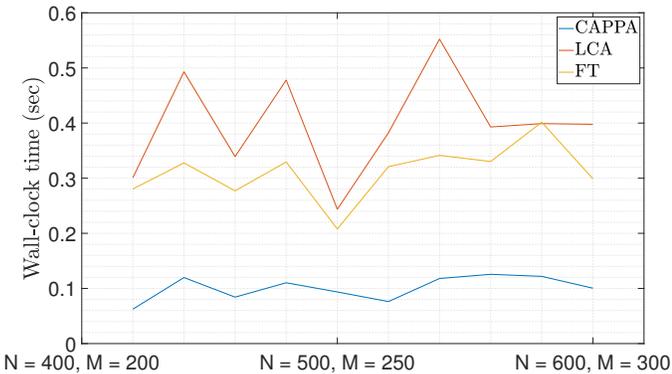}
    \caption{The wall-clock time of CAPPA, LCA and FT for 10 trials for various values of $N,M$.}\label{fig:wc NM}
\end{figure}

Finally, we studied the effect of the discretization effect on the performance of CAPPA. Figure \ref{fig:dt} plots the error $\|x(t)-x_{fmin}\|$ for various discretization steps $dt\in [10^{-6}, 10^{-3}]$. It can be observed that the performance of CAPPA does not change with the discretization step for $dt\leq 10^{-3}$. 

\begin{figure}[!ht]
    \centering
        \includegraphics[width=1\columnwidth,clip]{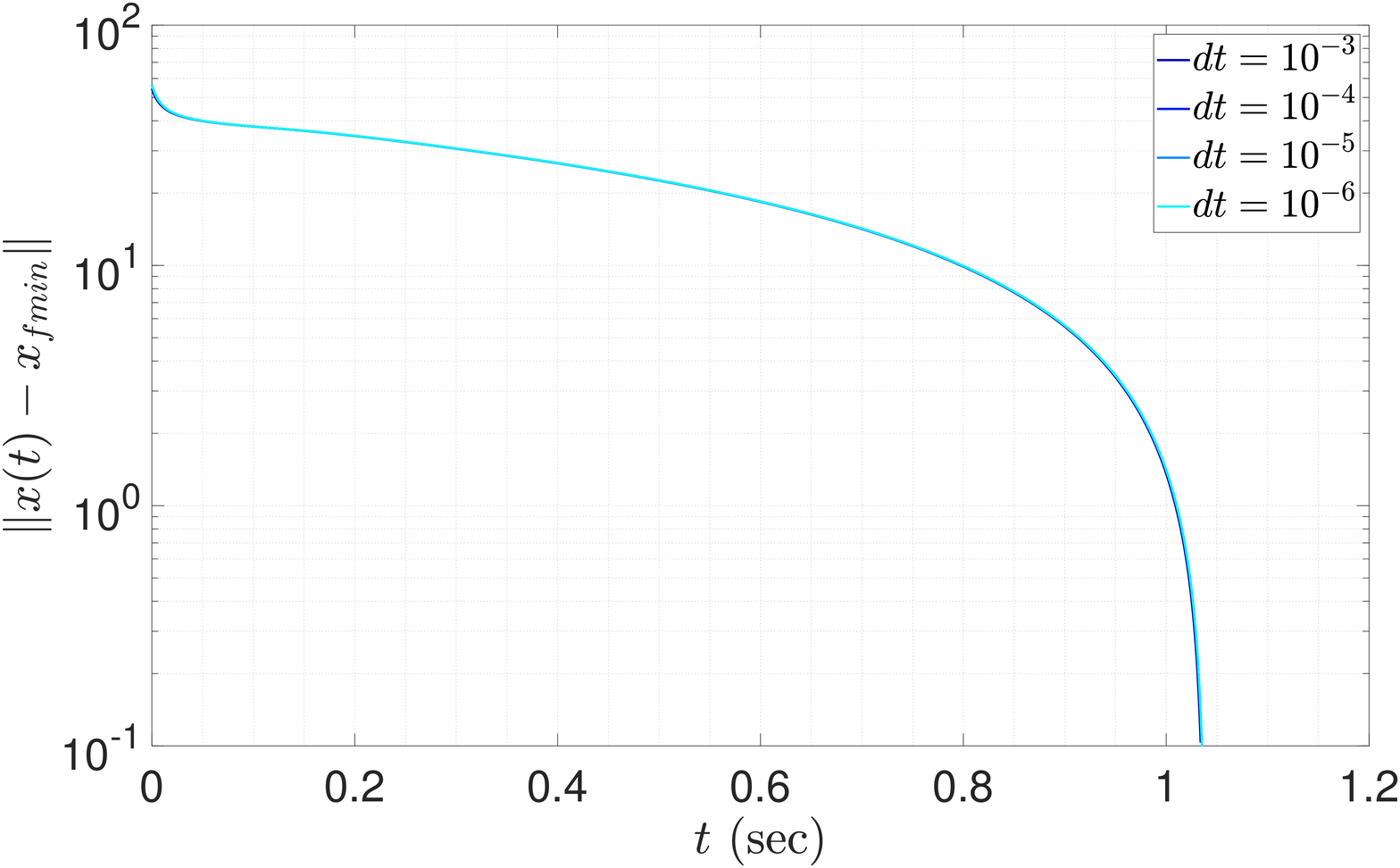}
    \caption{Effect of discretization step $dt$ on the performance of CAPPA.}\label{fig:dt}
\end{figure}

\section{Conclusion and Future Work}\label{sec:conclusion}
In this paper, we prescribed a novel proximal dynamical system for addressing the problem of sparse recovery under RIP condition on the measurement matrix. The proposed CAPPA exhibits fixed-time convergence to the unique critical point of \eqref{eq:problem}. Compared to LCA (or its finite-time modification), CAPPA is shown to achieve faster convergence in numerical experiment, both in number of iterations, as well as in wall-clock time. The faster convergence guarantees of CAPPA establishes that a large-scale implementation on an analog circuit, could lead to significant improvements in the computational time required for SR. 

Besides strong guarantees on convergence for CAPPA, its discrete implementation on a digital platform showcases similar accelerated convergence, as corroborated by the simulations. Discrete-time implementation of LCA resembles the soft iterative thresholding methods for SR~\cite{balavoine2015discrete}, which encourages us to explore equivalent discrete-time versions of CAPPA. Authors in \cite{polyakov2019consistent} presented a consistent discretization scheme for a class of homogeneous finite- and fixed-time stable systems. One of the open problems is to find a discretization scheme for the general class of fixed-time stable dynamical systems so that the fixed-time convergence in the continuous-time can be translated to fixed-number-of-iterations convergence in the discrete-time. In future, we would like to investigate such discretization schemes for a general class of dynamical systems that exhibit fixed-time stability, particularly the systems such as the one presented in this paper. 

\bibliographystyle{IEEEtran}
\bibliography{myref}

\appendices
\section{Proof of Theorem \ref{thm:contraction}}\label{app:th1}
\begin{proof}
    For any given $x\in\mathbb{R}^N$, from \cite[Proposition 12.26]{bauschke2017convex}, it follows that{\small
	\begin{equation}\label{eq:thm_cont1}
		\left(z(x)-(x-\eta F(x)\right)^\intercal(q-z(x)) \geq \eta \left(g(z(x))-g(q)\right)
	\end{equation}}\normalsize
	for all $z\in\mathbb{R}^N$. In particular, for $q=x^*$ and after making some re-arrangements, \eqref{eq:thm_cont1} reads:
	\begin{align}\label{eq:thm_cont2}
	    \left(z(x)-x\right)^\intercal\left(x^*-z(x)\right) \geq \quad &\eta\left(g(z(x))-g(x^*)\right) + \nonumber \\
	    &\eta  F(x)^\intercal\left(z(x)-x^*\right)
	\end{align}
	Furthermore, from Lemma~\ref{lem:optim_cond}, it follows that
	\begin{equation}\label{eq:intt}
	    \eta \left(g(z(x))-g(x^*)\right) \geq \eta F(x^*)^\intercal (x^*-z(x)).
	\end{equation}
	Using \eqref{eq:intt}, \eqref{eq:thm_cont2} {results into}:
	\begin{equation*}
	    \left(x-z(x)\right)^\intercal\!\left(x^*-z(x)\right)\leq\eta\left(F(x^*)-F(x)\right)^\intercal\!\left(z(x)-x^*\right),
	\end{equation*}
	which can re-written as{\small
	\begin{align}\label{eq:int}
	    \left(x\!-\!z(x)\right)^\intercal\!\left(x^*\!-\!z(x)\right)\leq\eta\left(F(x^*)\!-\!F(z(x))\right)^\intercal\!\left(z(x)\!-\!x^*\right) \nonumber\\
	    +\eta\left(F(z(x))\!-\!F(x)\right)^\intercal\!\left(z(x)\!-\!x^*\right).
	\end{align}}\normalsize
	From Lemma~\ref{lem:RIP}, the first term in the right hand side of \eqref{eq:int} can be upper bounded as follows:
	\begin{equation}\label{eq:int1}
	    \eta\!\left(F(x^*)\!-\!F(z(x))\right)^\intercal\!\left(z(x)\!-\!x^*\right) \leq -\eta(1-\delta_{2s})\|x^*\!-\!z(x)\|^2.
	\end{equation}
	Using the Cauchy--Schwarz inequality and again from Lemma~\ref{lem:RIP}, the second term in the right hand side of \eqref{eq:int} can be upper bounded as follows:
	\begin{equation}\label{eq:int2}
	    \eta (F(z(x))- F(x))^\intercal(z(x)-x^*) \leq \lambda L\|x-z(x)\|\|x^*-z(x)\|,
	\end{equation}
    where $L\coloneqq\|\Phi\|_2\sqrt{1+\delta_{2s}}$. Using {Cauchy's} inequality, the right hand side of \eqref{eq:int2} can further be upper bounded as follows:
	\begin{equation*}
	\eta L\|x-z(x)\|\|x^*-z(x)\|\!\leq\!\frac{1}{2}\|x-z(x)\|^2+\frac{(\!\eta L\!)^2}{2}\!\|x^*-z(x)\|^2    
	\end{equation*}
	and so, \eqref{eq:int2} {results into}: {\small
	\begin{align}\label{eq:int22}
	    \eta\left(F(z(x))-F(x)\right)^\intercal\!\left(z(x)-x^*\right) \leq &\frac{\eta^2L^2}{2}\|x^*-z(x)\|^2 \nonumber \\
	    &+ \frac{1}{2}\|x-z(x)\|^2.
	\end{align}}\normalsize
	Using \eqref{eq:int1} and \eqref{eq:int22}, the right hand side of \eqref{eq:int} can be upper bounded as follows:
	\begin{align}\label{eq:int3}
	    (x\!-\!z(x))^\intercal\!(x^*\!-\!z(x)) \leq &-\eta\mu\|x^*\!-\!z(x)\|^2 + \frac{1}{2}\|x\!-\!z(x)\|^2 \nonumber\\
	    &+ \frac{\eta^2L^2}{2}\|x^*-z(x)\|^2,
	\end{align}
	where $\mu\coloneqq(1-\delta_{2s})$. Furthermore, the left hand side of \eqref{eq:int3} can be re-written as
	\begin{align}\label{eq:int4}
	    \left(x-z(x)\right)^\intercal\!\left(x^*-z(x)\right) = &\frac{1}{2}\|x-z(x)\|^2 + \frac{1}{2}\|x^*-z(x)\|^2 \nonumber \\
	    &-\frac{1}{2}\|x-x^*\|^2.
	\end{align}
	Using {\eqref{eq:int4}, \eqref{eq:int3} results into:}
	\begin{align*}
		\|x-z(x)\|^2+\|x^*-z(x)\|^2-\|x-x^*\|^2 \leq \|x-z(x)\|^2\\
		 + \eta^2L^2\|x^*-z(x)\|^2 -2\eta\mu \|x^*-z(x)\|^2,
	\end{align*}
	which simplifies to
	\begin{equation}\label{eq:cont_thm}
	    \|z(x)-x^*\|^2 \leq \bar c\|x-x^*\|^2,
	\end{equation}
	where $\bar c\coloneqq\frac{1}{1+2\eta \mu-\eta^2 L^2}$. Note that $\bar c\in(0,1)$, since by the assumption of the theorem, $\eta\in\left(0,\frac{2\mu}{L^2}\right)$ and so, \eqref{eq:cont_thm} can be re-written as
    \begin{equation*}
	    \|z(x)-x^*\| \leq c\|x-x^*\|,
	\end{equation*}
    where $c\coloneqq\sqrt{\bar c}\in(0,1)$.
\end{proof}

\end{document}